\journal{Journal of Symbolic Computation}
\def\g2{ \hbox{\got g}_2}
\def\f4{\hbox{\got f}_4}
\def\d4{\hbox{\got d}_4}
\def\e6{\hbox{\got e}_6}
\def\fs4{\hbox{\gots f}_4}
\def\F4{\hbox{\got F}_4}
\def\Fs4{\hbox{\gots F}_4}
\def\es7{ \hbox{\got e}_7}
\def\es8{ \hbox{\got e}_8}
\def\F{\mathbb F}
\def\al{\ifcase\xypolynode\or F \or A\or B\or C\or D\or G\fi}
\def\ala{\ifcase\xypolynode\or a \or b\or c\or d\or g\or f\fi}
\newtheorem{te}{Theorem}
\newtheorem{defi}{Definition}
\newtheorem{ex}{Example}
\begin{document}
\begin{frontmatter}

\title{An algorithm to compute  minimal Sullivan algebras}
\author{Antonio Garv{\' i}n}
\address{Applied Math Dept., University of Malaga, Malaga, Spain\\
email: garvin@uma.es}
\author{Rocio Gonzalez-Diaz}
\address{Applied Math Dept. I, University of Seville, Seville, Spain\\
email: rogodi@us.es}
\author{Belen Medrano}
\address{Applied Math Dept. I, University of Seville, Seville, Spain\\
email: belenmg@us.es}

\begin{abstract}
In this note, we give an algorithm that starting with a Sullivan algebra gives us its minimal model. This algorithm is a kind of modified  AT-model algorithm used to compute in the past other kinds of topology information such as (co)homology, cup products on cohomology and persistent homology. Taking as input a (non-minimal) Sullivan algebra $A$ with an ordered finite set of generators preserving the filtration defined on  $A$, we obtain as output a minimal Sullivan algebra with the same rational cohomology as $A$.
\end{abstract}

\begin{keyword}
Sullivan algebras, minimal models, chain homotopy, AT-model 
\end{keyword}


\end{frontmatter}

\section{Introduction}

 Algebraic Topology consists, essentially, in the study of algebraic invariants associated with topological spaces. 
 One of these invariants is the homotopy type of a space.
At present we are still far from having a complete algebraic description of homotopy. This could be the reason why the computability of the homotopy type is still an open problem nowadays.  However, this description exists within the framework of Rational Homotopy Theory (\cite{GTM2000,felx} is nowadays a standard reference for the theory).
Two distinct approaches to Rational Homotopy Theory were given independently by Quillen \cite{quillen} and Sullivan \cite{sullivan} at the end of the sixties. 
Following the classical Sullivan's approach \cite{Sul78}, we associate to every rational homotopy type of $1$-connected spaces (nilpotent spaces, in general) of finite type in a unique way (up to isomorphism) its minimal model, which is a commutative differential graded algebra.
This association is functorial and encode the homotopic properties of the space up to rationalization.
From the theoretical point of view, we can always get the minimal model of a space, but in practical examples, effective computations are not always possible.
To get the minimal model is essential because it determines the rational homotopy type of the space
and sometimes we are able to recognize the space from its minimal model.

A weaker version of a minimal model is the one of a Sullivan algebra.
The basic difference between both of them is that a minimal model is a Sullivan algebra such that its differential does not have linear terms.
There are many situations that we get a Sullivan algebra instead of a minimal model. For example, if we have a fibration, we can associate it with a minimal extension or relative Sullivan algebra and we can get from it a model of the total space. Starting with a minimal model of the base we can extend it to obtain a Sullivan algebra model of the total space of the fibration, but this model, in general, is not minimal.
If we want to try to recognize the total space we can follow a standard procedure:
As a Sullivan algebra is always the tensor product of a minimal Sullivan algebra and a contractible one,
the idea is to eliminate the contractible part.
In concrete examples, the procedure is ``ad hoc''. 
See, for example, \cite{Cirici}, where the existence of Sullivan minimal models of operads algebras is given by adapting Sullivan's step by step construction.
In general,
such ``ad hoc'' procedure is not effective
 and a computational approach would be of interest.

Our aim in this paper is to give an effective algorithm that starting with a non-minimal Sullivan algebra ends with its minimal model.
The idea is to use a modification of the incremental algorithm for computing the AT-model of a chain complex \cite{GR2005,GR2009,GR2009b}.
Essentially the incremental algorithm is a sequence of chain contractions of modules that starting with a chain complex that ends with its homology. The key idea is to get a sequence of chain contraction of algebras
instead of modules, not finishing with zero differential but with a differential with no linear terms.
We prove that this is possible starting with a Sullivan algebra with a finite number of generators and we give several examples of effective computation of minimal models. An implementation  of our method that runs in CoCalc (https://cocalc.com/) 
can be download to test from the website http://grupo.us.es/cimagroup/downloads.htm.

The paper is organized as follows. Section 1, 2 and 3 are devoted to introducing the background of the paper (DG-modules, AT-models, CDG-algebras and Sullivan algebras). In Section 4,  as a main result of the paper, an algorithm is provided to compute minimal Sullivan algebras from Sullivan algebras not necessarily being minimal. Examples are given in Section 5. The paper ends with a section devoted to conclusions and future work.

\section{Differential graded  modules and AT-models}

In this section, we introduced the background for DG-modules needed to understand the paper and an algorithm for computing  AT-models \cite{GR2005} which is a precursor of the algorithm for computing minimal Sullivan algebras, as we will see later.

Let $\Lambda$ be a commutative ring  with $1 \neq 0$, taken henceforth as ground ring. Denote by $M=\Lambda (m_0,\dots,m_n)$ the $\Lambda$-module $M$ finitely generated by the elements $m_0,\dots,m_n \in M$. That is, for any $x \in M$, there exist $\lambda_0,\dots, \lambda_n \in \Lambda$ such that $x = \lambda_0 m_0 + \dots + \lambda_n m_n$. For each $i$, $0\leq i\leq n$, we denote the coefficient $\lambda_i$ by coeff$(x,m_i)$. Finally, we say that $x\in M$ has index $j$ and write index$(x)=j$ if $x=m_j$.

\begin{defi}
A graded module (G-module) $M$ is a family of
$\Lambda$-vector spaces $\{M^n\}_{n \in \mathbb{Z}}$.
We will suppose that $M^n=0$ for $n < 0$. 
We say that an element  $x \in M$ is homogeneous if  $x \in M^n$ for some $n$ and, in such case, we say that $x$
has degree $n$ and write $|x|=n$.
\end{defi}

\begin{defi}
 A G-module morphism $f:M\to N$ of degree $q$ 
 (denoted by $|f|=q$) is a family  of  homomorphisms $\{f^n\}_{n \in \mathbb{Z}}$
  such that $f^n: M^n \to N^{n+q}$ for all $n$.
\end{defi}

\begin{defi}
Given two G-modules $M$ and $N$, $M\otimes N$ is defined as the G-module  $$M\otimes N=\oplus_{n\in\mathbb{Z}} 
(M\otimes N)^n$$
where 
$$(M\otimes N)^n:=\oplus_{p+q=n}(M_p\otimes N_q)$$ 
and the degree of an element $a \otimes b \in M \otimes N$ is
$$|a \otimes b| = |a| + |b|$$
\end{defi}

\begin{defi}
Given two G-module morphisms, $f:M\to N$ and $g:M\to N$, the tensor product $f \otimes g$ is defined adopting the Koszul's convention as:
$$(f\otimes g)(x\otimes y)=(-1)^{|g|\cdot |x|}f(x)\otimes g(y)$$
\end{defi}

\begin{defi}
A differential over a G-module $M$ is a G-module morphism $d_M:M\to M$ such that $|d_M|=\pm 1$. In this paper all the differentials will have degree $+1$. 
\end{defi}

\begin{defi}
A DG-module is a G-module $M$ endowed with a differential $d_M:M\to M$ which satisfies that $d_M d_M=0$. It is denoted by $(M,d_M)$.
\end{defi}

\begin{defi}
A DG-module morphism $f:(M,d_M)\to (N,d_N)$ of degree $q$ is a G-module morphism $f:M\to N$ of degree $q$ such that $d_N f = (-1)^q f d_M$.
\end{defi}

\begin{defi}
If $(M,d_M)$ is a DG-module, then ($M\otimes M,d_{M\otimes M}$) is a DG-module with the differential given by:
$$d_{M\otimes M} = id_M \otimes d_M + d_M\otimes id_M$$
\end{defi}

A contraction is a chain homotopy equivalence between two DG-modules.
\begin{defi}
Given two DG-modules $(M,d_M)$ and $(N,d_N)$, a contraction from $(M,d_M)$ to $(N,d_N)$ is a triplet $(f,g,\phi)$ such that:
\begin{itemize}
\item $f: (M,d_M)\to (N,d_N)$ and $g: (N,d_N)\to (M,d_M)$ are DG-module morphisms of degree $0$ satisfying that:
$$fg=id_N$$
\item $\phi: M\to M$ is a G-module morphism
of degree $-1$ satisfying that:
$$\begin{array}{c}
f\phi=0, \phi g=0, \phi\phi=0,\\
id_M-gf=\phi d_M+ d_M \phi.
\end{array}$$
\end{itemize}
\end{defi}

If a contraction between two DG-modules exists, then it is easy to see that both DG-modules has isomorphic (co)homology.

An AT-model for a DG-module is nothing more than a contraction from the DG-module to its homology.  

\begin{defi}
An AT-model  for a DG-module $(M,d_M)$ is a  contraction $(f,g,\phi)$ from $(M,d_M)$ to a finitely-generated DG-module $(H,d_H)$ with null-differential, that is, $d_H=0$.
\end{defi}

Therefore, an AT-model $(f,g,\phi)$ for $(M,d_M)$ satisfies that:
$$\begin{array}{c}
fd_M=d_M g=f \phi=\phi g=\phi\phi=0,\\
id_M-gf=\phi d_M+ d_M \phi,\;fg=id_H,\\
\phi d_M\phi=\phi,\;d_M\phi d_M = d_M.
\end{array}$$

 The following algorithm computes an AT-model  for $(M,d_M)$.
 In this incremental algorithm, we start with a filtering or order of the generators. The condition is that the differential of a generator is a linear combination of the generators that appear previously in the order.
  
\begin{algorithm2e}[H]
{\bf Input: } A finitely-generated DG-module $(M,d_M)$ with $M=\Lambda \langle m_0,\dots,m_n\rangle$ such that $d_M(m_i)\in \Lambda \langle m_0,\dots,m_{i-1}\rangle$ for $1\leq i\leq n$
  \BlankLine
{\bf  Initialize:} $H:=\{m_0\}$, $f(m_0):=m_0$, $g(m_0):=m_0$  and $\phi(m_0):=0$\\
\For{$i=1$ {\bf to} $n$}
	{Let $a= f d_M (m_i)$ and
	$b= m_i-\phi d_M (m_i)$ \\
	\If{$a=0$}
    		{
    		(a new homology class $\alpha_i$ is born when $m_i$ is added)\\
    		$H:=H\cup\{m_i\},\;f(m_i):=m_i$,
    	 	$g(m_i):=b$
    	 and	$\phi(m_i):=0$ 
		}
	\If{$a\neq 0$ 
}	
        {
        let  $j=\max\{$index$(m): m\in 
	H$ and coeff$(a,m)\neq 0\}$\\
	(the homology class  $\alpha_j$ dies when  $m_i$ is added)\\
        $H:=H_{i-1}\setminus\{m_j\},\;f(m_i):=0$ and 
        $\phi(m_i):=0$\\
        	\ForEach{$m\in \{m_0,\dots,m_{i-1}\}$}
		{$f(m):=f(m)-\lambda a$ and
			$\phi(m):=\phi(m)+\lambda b$
		 where $\lambda:=\frac{\mbox{\tiny coeff}(f(m),m_j)}{\mbox{\tiny coeff}(a,m_j)}$ 
		}
		}
	}
	     \BlankLine
{\bf Output: } An AT-model $(f,g,\phi)$ for $(M,d_M)$.
  \caption{AT-model for computing homology \cite{GR2005}}.
   \label{alg}
\end{algorithm2e}

The complexity of Algorithm \ref{alg} is cubic in the number of generators of $M$ (see \cite{GR2005}). The (co)homology of $(M,d_M)$ is isomorphic to the one of $(H,d_H=0)$ since $(f,g,\phi)$ is a contraction.  Besides, Algorithm \ref{alg} can be used to  compute more sophisticated topology information such as  cup products on cohomology \cite{GR2014} or persistent homology \cite{GR2011}.

\section{Graded (differential) algebras}

In this section we recall the notion of commutative differential graded algebras.

\begin{defi} A G-algebra $(A,\mu_A)$ is a G-module $A$ together with    a G-module morphism 
$\mu_A: A\otimes A\to A$
of degree $0$
such that: $$\mu_A(\mu_A(x\otimes y)\otimes z) = \mu_A(x\otimes \mu_A(y\otimes z))\;\;\mbox{($\mu_A$ is associative)}$$ and 
$$\mu_A(1_A\otimes x) = x = \mu_A(x\otimes 1_A)
\; \;\mbox{($1_A\in A^0$ is an identity)}
.$$
\end{defi}

 \begin{defi}
 A CG-algebra $(A,\mu_A)$  is a G-algebra that is commutative in the graded sense, that is:
 $$\mu_A(a_p\otimes a_q)=(-1)^{pq} \mu_A(a_q\otimes a_p)\;\mbox{ where }a_p\in A^p
 \,\mbox{ and }\,a_q\in A^q.$$
  \end{defi}

\begin{defi}
A DG-algebra $(A,\mu_A,d_A)$ is a G-algebra $(A,\mu_A)$ together with a G-module morphism
$d_A: A\to A$
of degree $+1$ such that:
$$d_Ad_A=0 \;\;\mbox{($d_A$ is a differential)}$$
and
 $$d_A\mu_A(x\otimes y) = \mu_A(d_A(x)\otimes y) + (-1)^{|x|} \mu_A(x\otimes d_A(y))
 \;\;\mbox{($d_A$ is a derivation)}. $$
 \end{defi}

 \begin{defi} Given two DG-algebras $(A,\mu_A,d_A)$ and $(B,\mu_B,d_B)$, a DG-algebra morphism 
$f:(A,\mu_A,d_A)\to (B,\mu_B,d_B)$ is a
DG-module morphism satisfying that $$f\mu_{A}=\mu_{B} f.$$
\end{defi}

\begin{defi}
A CDG-algebra is a 
CG- and DG-algebra.
 \end{defi}

  \begin{defi} \cite{19}
 Let $(f,g,\phi)$ a contraction from a DG-algebra
 $(A,\mu_A,d_A)$ to a DG-algebra $(B,\mu_B,d_B)$.
 We say that $\phi$ is an algebra homotopy if
 $$\mu_A (1_A\otimes \phi+\phi\otimes gf)=\phi\mu_A.$$
 \end{defi}
 
 \begin{defi} \cite{pedro}
  We say that  a contraction $(f,g,\phi)$  from a DG-algebra
 $(A,\mu_A,d_A)$ to a DG-algebra $(B,\mu_B,d_B)$ is a full algebra contraction if $f$ and $g$ are DG-algebra morphisms and $\phi$ is an algebra homotopy.
 \end{defi}

Examples of full algebra contractions are given, for example, in \cite{20},  using the ``tensor trick".

\section{Sullivan algebras}

\label{prelim} We recall  basic results and definitions from Rational Homotopy Theory
for which~\cite{GTM2000} is a standard reference.

 A topological space is rational if all their homotopy groups are   $\mathbb Q$-vector spaces.  Rational homotopy can be seen as classical homotopy theory over the rational spaces. 
 Following this theory, each space can be functorially associated with a rational space that satisfies that their homotopy groups are vector spaces on the rational numbers. This process that associates a rational space with each space is called rationalization. This rationalization suppresses the torsion part of the homotopy groups and is, therefore, a first approximation to the given space. From this point of view, the rational homotopy of a space is nothing more than the classical homotopy of its associated rational space. Sullivan showed in \cite{S94,S77} that the rational homotopy type of a simply connected space is faithfully represented by graduated, differential and commutative algebras (CDGAs). In particular, Sullivan defined a functor $F$ that associates each space $X$  with a CDGA $(F(X), d)$ on the rational numbers, with the property of inducing isomorphisms between the respective cohomologies (the one of the algebra $(F(X), d)$ and the one of the space $X$) with rational coefficients. In general, the algebra $(F(X), d)$ is huge and difficult to compute. This is the reason why Sullivan proposed to build a smaller algebra $(M, d)$ from the algebra $(F(X), d)$ called a ``minimal Sullivan algebra" which is a graduated and free algebra over a certain graduated vector space. This minimal model is unique except for isomorphisms and encodes the rational homotopy type of $X$.

From now on,  the ground ring $\Lambda$ is the field of the rational numbers  $\mathbb Q$.
Besides, we will work with DG-algebras  which are free  over graded vector spaces, that is,  for any element $x$ of the algebra, $x\in \Lambda V\oplus \Lambda^{\geq 2} V$ 
for $V$ being a graded vector space $V=\{V^p\}_{p\geq 1}$.
Denote by $\Lambda ^{\geq 2}V$ the G-module generated by elements of $\Lambda V$ obtained as the product of two or more elements of $V$. Let 
$V=\{m_1,\dots,m_n\}$. Then, for any $x\in \Lambda V$, 
$x=\lambda_1 m_1+\cdots+\lambda_n m_n+b$ for some  $b\in \Lambda ^{\geq 2}V$.  The expression coeff$(x,m_i)$ will denote  the coefficient $\lambda_i$.
  Finally,
 we say that $x\in \Lambda V$ has index $j$ and write index$(x)=j$ if $x=m_j$.
We will refer such an algebra by
$(\Lambda V,\mu_{\Lambda V},d_{\Lambda V})$  or simply by 
$\Lambda V$ when no confusion can arise.

\begin{defi}
A DG-algebra $\Lambda V$ is contractible if for some $U \subset  V$ the
inclusions of $\Lambda U\oplus dU$  in $\Lambda V$ extend to an isomorphism:
$$\Lambda (U \oplus dU)  \stackrel{\simeq}{\to} \Lambda V,$$
where $dU=\{d_{\Lambda V}(u): u\in U\}$.
 \end{defi}

\begin{defi}
A Sullivan algebra $(\Lambda V,\mu_{\Lambda V},d_{\Lambda V})$ is a kind of CDG-algbebra:
 \begin{itemize}
\item[(1)] It is a  CG-algebra  which is free  over a graded vector space $V$.
\item[(2)]  There is an increasing filtration of subgraded vector spaces in $V$:
 $$V(0)\subset V(1)\subset \cdots \subset V(\kappa)=V$$
 such that 
 $$\mbox{$d_{\Lambda V}(x)=0$ for any $x\in V(0)$ and}$$ 
 $$\mbox{$d_{\Lambda V}(x)\in \Lambda V(k-1)$ for any $x\in V(k)$, with $1\leq k\leq \kappa$.}$$
\end{itemize}
\end{defi}

 A Sullivan algebra is a free algebra over a vector space with a generator set $V$ and essentially the difference with a minimal Sullivan algebra is that the differential of the minimal algebra has linear terms in the generators of $V$. In other words, being minimal means that the Sullivan algebra has no linear part in the generators of  $V$.

\begin{defi}%
  We say that the Sullivan algebra $(\Lambda V,\mu_{\Lambda V},d_{\Lambda V})$ is {\it minimal} if $d_{\Lambda V}(x) \in \Lambda ^{\geq 2} V$ for all $x\in V$.
  \end{defi}

\begin{te}\cite{GTM2000}
For every rational homotopy type of spaces there is a unique (up to isomorphism) minimal Sullivan algebra. 
\end{te}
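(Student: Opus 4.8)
The plan is to reduce the statement to a purely algebraic one and then settle existence and uniqueness separately. By the functor $F$ recalled above, a rational homotopy type is encoded by a CDG-algebra $(A,d_A)$ with $H^{0}(A)=\mathbb{Q}$ and (in the simply connected case) $H^{1}(A)=0$, and two spaces share a rational homotopy type exactly when their algebras are linked by a zig-zag of quasi-isomorphisms, where a quasi-isomorphism is a DG-algebra morphism inducing an isomorphism in cohomology. So it suffices to show that every such $(A,d_A)$ receives a quasi-isomorphism $\varphi\colon(\Lambda V,d_{\Lambda V})\to(A,d_A)$ from a minimal Sullivan algebra, and that $(\Lambda V,d_{\Lambda V})$ is then unique up to isomorphism. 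In the language of Section 2, producing such a $\varphi$ refines the idea of contracting onto cohomology: one does not reduce all the way to a zero differential, but only to a differential with no linear terms.

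For existence I would run the classical step-by-step construction, inducting on the degree $n$ of the generators. Assume a Sullivan algebra generated in degrees $\leq n$, together with a morphism $\varphi_n$ to $(A,d_A)$, has been built so that $\varphi_n$ is an isomorphism on $H^{\leq n}$ and a monomorphism on $H^{n+1}$. I would then enlarge the generating space in degree $n+1$ in two moves: first adjoin closed generators, with $d_{\Lambda V}=0$, sent to cocycles of $A$ whose classes span a complement of the image of $H^{n+1}(\varphi_n)$, restoring surjectivity in that degree; then adjoin further generators of degree $n+1$ whose differentials are cocycles of $\Lambda V$ representing the unwanted classes in the kernel of $H^{n+2}(\varphi_n)$, killing them while extending $\varphi$ compatibly. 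The decisive point—and the place where simple-connectivity (or, more generally, nilpotency) is used—is that every such differential automatically lands in $\Lambda^{\geq 2}V$: a degree-$(n+2)$ cocycle assembled from generators of degree $\leq n+1$ carries no linear term, so the minimality condition $d_{\Lambda V}(x)\in\Lambda^{\geq 2}V$ is preserved at each stage, and the Sullivan filtration $V(0)\subset V(1)\subset\cdots$ is produced by the induction itself.

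For uniqueness I would argue in two stages. First, given two minimal models $\varphi\colon\Lambda V\to A$ and $\psi\colon\Lambda W\to A$, I would invoke the lifting property of Sullivan algebras against quasi-isomorphisms (valid up to homotopy) to lift $\varphi$ through $\psi$, obtaining a morphism $\Phi\colon\Lambda V\to\Lambda W$ with $\psi\Phi\simeq\varphi$; since $\varphi$ and $\psi$ are quasi-isomorphisms, so is $\Phi$. Second—and this is where I expect the real difficulty to lie—I would prove that a quasi-isomorphism between minimal Sullivan algebras is already an isomorphism. This is exactly where minimality is indispensable: because neither $d_{\Lambda V}$ nor $d_{\Lambda W}$ has a linear part, $\Phi$ induces on the indecomposables a map $\Phi_0\colon V\to W$ that is a chain map against zero differentials, and the subalgebras generated in degrees $<n$ are stable under the differential. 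Filtering by word length and comparing cohomology degree by degree, a diagram chase forces $\Phi_0$ to be an isomorphism in each degree; once the linear part is an isomorphism, the standard argument for free graded-commutative algebras (an isomorphism on the associated graded of the word-length filtration is an isomorphism on the whole) upgrades $\Phi$ to an isomorphism. Hence $\Lambda V\cong\Lambda W$, which establishes the uniqueness and, together with the construction above, the whole statement.
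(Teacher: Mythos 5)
The paper offers no proof of this theorem---it is quoted as a known result from \cite{GTM2000} (Felix--Halperin--Thomas)---so there is no internal argument to compare against; your sketch reproduces precisely the classical proof from that reference: existence via the degree-wise step-by-step construction (with the observation that new differentials land in $\Lambda^{\geq 2}V$ because no generators of the relevant degree exist yet), and uniqueness by lifting one model through the other up to homotopy and invoking the fact that a quasi-isomorphism of minimal Sullivan algebras is an isomorphism. Your outline is correct and flags the genuinely delicate points (simple-connectivity/nilpotency for minimality, and the quasi-isomorphism-implies-isomorphism step) exactly where the cited source places them, so it matches the standard approach essentially line for line.
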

 
  An essential property of  Sullivan algebras is that they are isomorphic to a minimal algebra tensor product with a contractile algebra and, therefore, removing the contractile part, we obtain its minimal model.

 \begin{te}[Theorem 14.9 of \cite{felx}]
Every Sullivan algebra $\Lambda V$  is isomorphic to $\Lambda W\otimes \Lambda (U \oplus dU)$
 where $\Lambda W$ is a minimal Sullivan algebra and $\Lambda (U\oplus dU)$ is contractible.
\end{te}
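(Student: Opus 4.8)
The plan is to split off the linear part of the differential, use it to locate the acyclic directions in $V$, and then correct generators along the filtration so that the acyclic part becomes a contractible tensor factor. Writing $d$ for $d_{\Lambda V}$, I would decompose $d = d_0 + d^{+}$, where $d_0\colon V \to V$ is the linear component of the differential and $d^{+}\colon V \to \Lambda^{\geq 2}V$ gathers the higher terms. Projecting the identity $d\,d = 0$ onto its $V$-component yields $d_0 d_0 = 0$, so $(V,d_0)$ is a cochain complex of graded $\mathbb{Q}$-vector spaces. Over a field I may choose a graded splitting $V = W_0 \oplus U \oplus d_0 U$ in which $d_0|_{U}$ is injective, $d_0 U$ is the image of $d_0$, and $W_0 \subseteq \ker d_0$ is a complement of $d_0 U$; thus $W_0 \cong H(V,d_0)$ and $U \oplus d_0 U$ is the acyclic summand. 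Note that $V(0)$, where $d$ vanishes, contributes only to $W_0$, which anchors the induction below.

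Next I would let the full differential dictate the contractible factor. For each $u \in U$ set $\bar d u := d(u) = d_0 u + d^{+}u$ and put $dU := \{\bar d u : u \in U\}$. Then $d(\bar d u) = d\,d(u) = 0$, so the generators $U$ and $dU$ pair up under $d$ exactly as in the definition of a contractible algebra, and $\Lambda(U \oplus dU)$ is contractible. Since the linear parts of the elements of $W_0 \cup U \cup dU$ are precisely $W_0 \cup U \cup d_0 U$, a basis of $V$, these elements freely generate $\Lambda V$; equivalently, passing from $\{W_0,U,d_0U\}$ to $\{W_0,U,dU\}$ is a change of generators that is triangular with respect to the filtration, hence invertible.

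The substantive step, and the one I expect to be the main obstacle, is to replace $W_0$ by a space $W$ of \emph{minimal} generators for which $\Lambda W$ is a sub-DG-algebra complementary to $\Lambda(U \oplus dU)$, giving $\Lambda V \cong \Lambda W \otimes \Lambda(U \oplus dU)$. I would argue by induction on the filtration degree $k$, assuming the splitting already established on $\Lambda V(k-1)$. For a generator $w \in W_0 \cap V(k)$ one has $d(w) \in \Lambda V(k-1)$, and since $w \in \ker d_0$ the differential $d(w) = d^{+}w$ carries no linear part; by the inductive decomposition write $d(w) = \xi + \eta$ with $\xi \in \Lambda W$ and $\eta$ in the (necessarily $d$-stable) ideal generated by $U \oplus dU$. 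Because $\Lambda W$ is already $d$-stable, $\xi$ and $\eta$ are separately cocycles, and acyclicity of $\Lambda(U \oplus dU)$ then yields $\theta$ in that ideal with $d\theta = \eta$ modulo $\Lambda W$; replacing $w$ by $w - \theta$ removes the offending component and leaves a corrected generator whose differential lies in $\Lambda^{\geq 2}W$. Verifying that this correction is always solvable (this is exactly where acyclicity of $\Lambda(U \oplus dU)$ is used) and that the process terminates, each $V(k)$ being finite-dimensional, completes the construction and yields the asserted decomposition with $\Lambda W$ minimal and $\Lambda(U \oplus dU)$ contractible.
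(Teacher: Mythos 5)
Your argument is essentially the classical proof of this statement (it is, in outline, the proof of Theorem 14.9 in \cite{felx}): split off the linear part $d_0$ of the differential, choose a graded splitting $V=W_0\oplus U\oplus d_0U$, trade $d_0U$ for $dU$ as generators, and then correct the $W_0$-generators inductively along the filtration using acyclicity of the contractible factor. The paper itself does not prove the statement at all --- it imports it from \cite{felx} --- and the closest thing to an in-paper proof is a genuinely different, constructive route: Algorithm \ref{alg:minimal} (Theorem \ref{th:main}) incrementally builds a full algebra contraction $(f,g,\phi)$ from $\Lambda V$ to a minimal $\Lambda W$, one generator at a time in AT-model style, and the tensor decomposition is then read off a posteriori, $g(W)$ supplying the minimal generators and the pairs $(m_i,m_j)$ produced by the algorithm supplying $U\oplus dU$. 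Your approach gives the standard structural proof; the paper's approach trades that for effectivity (explicit $f$, $g$, $\phi$ computed incrementally).

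Two points in your write-up need repair before it is a proof. First, the remark that $V(0)$ ``contributes only to $W_0$'' is false: $V(0)\subset\ker d_0$, but $\ker d_0=W_0\oplus d_0U$, and elements of $V(0)$ can lie in $d_0U$ --- in the paper's Example 1 one has $v_2\in V(0)$ with $v_2=d(a_1)=d_0(a_1)$, so $v_2$ lies in $\mathrm{im}\,d_0=d_0U$ under your own conventions. What your induction actually requires, and what you nowhere arrange, is that the splitting $V=W_0\oplus U\oplus d_0U$ be chosen \emph{compatibly with the filtration}, i.e.\ $V(k)=(W_0\cap V(k))\oplus(U\cap V(k))\oplus(d_0U\cap V(k))$ for every $k$; an arbitrary graded splitting need not restrict to the $V(k)$, and then the generators ``$w\in W_0\cap V(k)$'' do not exhaust stage $k$ and the induction does not get off the ground. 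This is fixable (over a field one chooses the splitting inductively along the filtration), but it must be stated and used. Second, in the correction step you need $d\theta=\eta$ \emph{exactly}, with $\theta$ in the ideal $I=\Lambda W\otimes\Lambda^{+}(U\oplus dU)$; this follows not from acyclicity of $\Lambda(U\oplus dU)$ alone, as you suggest, but from acyclicity of $I$ itself, which uses the inductively established tensor decomposition of $\Lambda V(k-1)$ together with K\"unneth and $H(\Lambda^{+}(U\oplus dU))=0$, so the vague ``$d\theta=\eta$ modulo $\Lambda W$'' should be sharpened accordingly. Granting these repairs, the rest is sound: $\xi$ and $\eta$ are separately cocycles because $\Lambda W$ and $I$ are $d$-stable complementary summands, $d(w-\theta)=\xi$, and $\xi\in\Lambda^{\geq 2}W$ because the linear part of $dw$ vanishes while the linear terms of $\xi$ and $\eta$ lie in complementary summands of $V$.
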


\section{An algorithm to compute minimal Sullivam algebras}

 Essentially what is done in the literature for calculating a minimal Sullivan algebra (see, for example, \cite{GTM2000}) is a change of basis in a way that   the algebra is divided in a contractible part and a non-contractible part, and the differential of the non-contractible part has no linear terms. Therefore, the non-contractible part is a minimal Sullivan algebra having the same cohomology of the given algebra indicating that it is the minimal model of the given algebra.

Our main goal in this section  is to provide an efficient algorithm for computing a minimal Sullivan algebra from a Sullivan algebra not necessarily being minimal.
It is precisely the fact that the Sullivan algebra and its minimal model have the same cohomology (since they are only different in a contractible part) what made us relate this problem to another apparently different in principle: The incremental algorithm for computing an "AT-model" of a chain complex (see Algorithm \ref{alg}).

\begin{algorithm2e}[H]
{\bf Input: } A Sullivan algebra $\Lambda V$,
being 
$V(0)\subset V(1)\subset\cdots \subset V(\kappa) =V$ a filtration and 
$V=V(0)\cup \{m_1, \ldots, m_n\}$ 
such that if $1\leq i<i'\leq n$ then $m_i\in V(k)$ and $m_{i'}\in V(k')$ 
where $1\leq k\leq k'\leq \kappa$.

\BlankLine

$W:=V(0)$, 
$f:=id_{V(0)}$, $g:=id_{V(0)}$, $\phi:=0$ and $d_{\Lambda W}:=0$
\\
\For{$i=1$ {\bf to} $n$}
{
let $a:= f d_{\Lambda V}  (m_i)$ and 
	$b:= m_i-\phi d_{\Lambda V}  (m_i)$ \\
{\If{$a\in \Lambda ^{\geq 2} W$}
    {(the element $m_i$ is added to $W$)\\
    $W:=W\cup\{m_i\}$, $f(m_i):=m_i, \; g(m_i):=b
    $,
  $\phi(m_i):=0$ and $d_{\Lambda W}(m_i):=a
  $}
{\If {
$a \notin \Lambda ^{\geq 2} H$
}
	{
	let 
$j=\max\{$index$(m): m\in W$ and 
coeff$(a,m)\neq 0\}$\\
(the element $m_j$ is removed from $W$ and $(m_i,m_j)$ are paired) \\
$W:=W\setminus\{m_j\}$, $f(m_i):=0$ and $\phi(m_i):=0$ \\
	\ForEach{$m\in V(0) \cup \{m_1,\dots,m_{i-1}\}$ 
	}
	    {$f(m):=f(m)-\lambda a$
	    and
	    $\phi(m):=\phi(m)+\lambda b$ where  
	    $\lambda:=\frac{\mbox{\tiny coeff}(f(m),m_j)}{\mbox{\tiny coeff}(a,m_j)}$
	    }
    } 
    }
     $f \mu_{\Lambda V}:=\mu_{\Lambda W} (f\otimes f)$,
$g \mu_{\Lambda W}:= \mu_{\Lambda V} (g\otimes g)$,
$\phi \mu_{\Lambda V}:=\mu_{\Lambda V}(id_{\Lambda V} \otimes \phi + \phi \otimes g f)$ and $d_{\Lambda W} \mu_{\Lambda W}:= f d_{\Lambda V} \mu_{\Lambda V}(g\otimes g)$
}
}
\BlankLine
{\bf Output: } A full algebra 
contraction $(f,g,\phi)$ from $\Lambda V$ to $\Lambda W$.
\caption{}\label{alg:minimal}
\end{algorithm2e}

\begin{te}\label{th:main}
Given a Sullivan algebra $(\Lambda V,\mu_{\Lambda V},d_{\Lambda V})$, Algorithm \ref{alg:minimal}  produces a
a full algebra contraction
$(f,g,\phi)$  from 
$\Lambda V$ to $\Lambda W$. Moreover, $\Lambda W$ is a minimal Sullivan algebra.
\end{te}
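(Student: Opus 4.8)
The plan is to argue by induction on the number $i$ of generators already processed, keeping the invariant that after the $i$-th pass the triple $(f,g,\phi)$ is a full algebra contraction from $(\Lambda V_i,\mu_{\Lambda V},d_{\Lambda V})$ onto $(\Lambda W_i,\mu_{\Lambda W},d_{\Lambda W})$, where $V_i:=V(0)\cup\{m_1,\dots,m_i\}$ and $W_i$ is the current value of $W$, and moreover that $d_{\Lambda W}(w)\in\Lambda^{\geq 2}W_i$ for every generator $w\in W_i$. The base case is the initialization: $W_0=V(0)$, $f=g=\id$, $\phi=0$, $d_{\Lambda W}=0$, and since $d_{\Lambda V}$ vanishes on $V(0)$ this is trivially a full algebra contraction onto a minimal algebra.

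Before the inductive step I would check that the algorithm is well defined. As $\Lambda V$ is Sullivan with the generators ordered compatibly with the filtration, $d_{\Lambda V}(m_i)\in\Lambda V(k-1)$ lies in the subalgebra generated by the generators processed strictly before $m_i$; since $f$ and $\phi$ are already defined there and are extended multiplicatively, both $a=fd_{\Lambda V}(m_i)$ and $b=m_i-\phi d_{\Lambda V}(m_i)$ make sense. I would also record the consistency of the two prescriptions for the differential on $W$: taking $d_{\Lambda W}:=fd_{\Lambda V}g$ and using the contraction identities together with $d_{\Lambda V}^{2}=0$, one verifies $d_{\Lambda W}^{2}=0$, that $d_{\Lambda W}$ is a derivation agreeing with $d_{\Lambda W}\mu_{\Lambda W}=fd_{\Lambda V}\mu_{\Lambda V}(g\otimes g)$, and that on a generator $m_i$ inserted into $W$ one has $fd_{\Lambda V}g(m_i)=fd_{\Lambda V}(m_i)=a$, because $fd_{\Lambda V}\phi=0$.

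For the inductive step I would treat the two branches of the loop separately. When $a\in\Lambda^{\geq 2}W_{i-1}$ the generator $m_i$ is appended to $W$ with $d_{\Lambda W}(m_i)=a$, and the contraction identities on $m_i$, namely $fd_{\Lambda V}(m_i)=d_{\Lambda W}f(m_i)$, $(\id-gf)(m_i)=(\phi d_{\Lambda V}+d_{\Lambda V}\phi)(m_i)$ and the side conditions $f\phi=\phi g=\phi\phi=0$, follow directly from $f(m_i)=m_i$, $g(m_i)=b$, $\phi(m_i)=0$ and the inductive hypothesis, exactly as in the birth case of Algorithm \ref{alg} but now carrying the nonzero value $a$ on the right-hand side; minimality is preserved because $a\in\Lambda^{\geq 2}W$. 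When $a$ has a nonzero linear part the top linear generator $m_j$ is deleted and the corrections $f(m):=f(m)-\lambda a$, $\phi(m):=\phi(m)+\lambda b$ are applied to the earlier generators; the proof that $(f,g,\phi)$ is still a contraction of modules is the same Gaussian-elimination computation as in the death case of Algorithm \ref{alg}, the role there played by ``killing a homology class'' being played here by ``cancelling the linear part of the transported differential''.

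The remaining point, which I expect to be the main obstacle, is the passage from the generators to the whole free algebra: every contraction identity must be promoted from the generating space to all of $\Lambda V_i$. Here I would exploit that $f$ and $g$ are defined as algebra morphisms and $\phi$ as an algebra homotopy through the tensor-trick formulas $f\mu_{\Lambda V}=\mu_{\Lambda W}(f\otimes f)$, $g\mu_{\Lambda W}=\mu_{\Lambda V}(g\otimes g)$ and $\phi\mu_{\Lambda V}=\mu_{\Lambda V}(\id_{\Lambda V}\otimes\phi+\phi\otimes gf)$, and invoke the full-algebra-contraction machinery of \cite{19,pedro,20}: once $fg=\id$ and the side and homotopy conditions hold on generators, these formulas force them on all products. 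The genuinely delicate verification is that the non-multiplicative identities --- in particular $\phi\phi=0$, $f\phi=0$, $\phi g=0$ and $\id-gf=\phi d_{\Lambda V}+d_{\Lambda V}\phi$ for the multiplicatively extended maps --- survive both this extension and the branch-B corrections at once, where the Koszul signs and the precise shape of the tensor-trick homotopy must be handled with care. Finally, minimality of $\Lambda W=\Lambda W_n$ is exactly the invariant $d_{\Lambda W}(w)\in\Lambda^{\geq 2}W$: branch A never introduces a linear term and branch B only deletes generators, the recomputation $d_{\Lambda W}=fd_{\Lambda V}g$ cancelling precisely the linear contributions, so that $\Lambda W$ has no linear part and is a minimal Sullivan algebra.
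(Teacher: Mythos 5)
Your proposal follows essentially the same route as the paper's proof: induction over the ordered generators with a two-branch case analysis mirroring the birth/death cases of Algorithm \ref{alg}, verification of the contraction identities ($f\phi=0$, $\phi g=0$, $\phi\phi=0$, $fg=\mathrm{id}$, the homotopy identity, and compatibility of $f,g$ with the differentials) on each newly processed generator, multiplicative extension of $f$, $g$, $\phi$ and $d_{\Lambda W}$ via the stated formulas, and minimality maintained as the invariant that $d_{\Lambda W}(w)\in\Lambda^{\geq 2}W$ since branch A only inserts generators with quadratic-or-higher differential and branch B only deletes generators. The only difference is presentational: you explicitly flag the well-definedness of $a$ and $b$ and the promotion of the identities from generators to all of $\Lambda V_i$ (steps the paper performs tersely, checking the properties only on $m_i$ and the corrected earlier generators and relying on the multiplicative definitions), so your attempt is correct and matches the paper's argument.
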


\begin{proof}
Denote   the output of Algorithm \ref{alg:minimal} at the step $i$
by $W_i$, $f_i$, $g_i$ and $\phi_i$. 
\\
First, it is straightforward to see that $(f_{0},g_{0},\phi_{0})$ is a full algebra contraction from $\Lambda V(0)$ to $\Lambda W_{0}$. 
\\
Now, by induction, suppose that  $(f_{i-1},g_{i-1},\phi_{i-1})$ is a full algebra contraction from $\Lambda V_{i-1}$ to $\Lambda W_{i-1}$ where 
$V_{i-1}:=V(0)\cup\{m_1,\dots,m_{i-1}\}$. 
Let us prove that $(f_{i},g_{i},\phi_{i})$ is a full contraction from $\Lambda V_i$ to $\Lambda W_{i}$ where 
$V_{i}:=V_{i-1}\cup\{m_{i}\}$. To start with, let us prove the following properties by induction:
\begin{enumerate}
    \item $f_i\phi_i=0$
    \item $\phi_i g_i=0$
    \item $\phi_i \phi_i=0$
    \item $f_i g_i=id_{\Lambda W_i}$
    \item $id_{\Lambda V_i} - g_if_i = \phi_i d_{\Lambda V_i} + d_{\Lambda V_i} \phi_i$
    \item $f_i d_{\Lambda V_i}=d_{\Lambda W_i}f_i$
    \item $d_{\Lambda V_i} g_i=g_i d_{\Lambda W_i}$
\end{enumerate}
Suppose first that $f_{i-1}d_{\Lambda V_i} (m_i) \in \Lambda ^{\geq 2} W_{i-1}$. In this case, it is enough to prove the properties above only for $m_i$: 

\begin{enumerate}
\item $f_i\phi_i(m_i)=0$ since $\phi_i(m_i)=0$.
\item $\phi_i g_i(m_i)=\phi_i(m_i-\phi_{i-1}d_{\Lambda V_i}(m_i))=\phi_i(m_i)-\phi_{i-1}\phi_{i-1}d_{\Lambda V_i}(m_i)=0$ since $\phi_i(m_i)=0$ and $\phi_{i-1}\phi_{i-1}=0$ by induction.
%
\item $\phi_i\phi_i(m_i)=0$ since $\phi_i(m_i)=0$.
\item $f_i g_i(m_i) = f_i(m_i-\phi_{i-1}d_{\Lambda V_i}(m_i))=f_i(m_i)-f_{i-1}\phi_{i-1}d_{\Lambda V_i}(m_i)=f_i(m_i)=m_i$
since $f_{i-1}\phi_{i-1}=0$ by induction.
\item 
$\phi_{i}d_{\Lambda V_i}(m_i) + d_{\Lambda V_i}\phi_{i}(m_i)=
\phi_{i-1}d_{\Lambda V_i}(m_i)=m_i - g_i(m_i)=m_i - g_{i}f_{i}(m_i)$.
%
\item $d_{\Lambda W_{i}} f_i(m_i)=d_{\Lambda W_{i}} (m_i)=f_{i-1} d_{\Lambda V_i}(m_i)=f_{i} d_{\Lambda V_i}(m_i)$.
\item $g_i d_{\Lambda W_{i}}(m_i)=
g_i f_{i-1} d_{\Lambda V_i}(m_i)=
g_{i} f_{i} d_{\Lambda V_i}(m_i)=
d_{\Lambda V_i}(m_i) - \phi_{i}d_{\Lambda V_{i}} d_{\Lambda V_i}(m_i) - d_{\Lambda V_{i}} \phi_{i} d_{\Lambda V_i}(m_i)=
d_{\Lambda V_i}(m_i) -  d_{\Lambda V_{i}} \phi_{i} d_{\Lambda V_i}(m_i)=
d_{\Lambda V_i}(m_i - \phi_{i-1} d_{\Lambda V_i}(m_i))=
d_{\Lambda V_i} g_i(m_i)$.
\end{enumerate} 
Second, suppose that 
$f_{i-1} 
d_{\Lambda V_i} (m_i) \notin \Lambda ^{\geq 2} W_{i-1}$. Let $j=\max\{$index$(m): 
m\in W_{i-1}$ and coeff$(f_{i-1} d_{\Lambda V_i}(m_i),m)\neq 0\}$.
%
Let $m\in V_{i-1}$ then:
%
\begin{enumerate}
\item 
$f_i \phi_i(m_i)=0$ since $\phi_i(m_i)=0$;\\
$f_i\phi_i(m)= f_i(\phi_{i-1}(m)+\lambda(m_i-\phi_{i-1}d_{\Lambda V_i}(m_i)) = f_{i-1}\phi_{i-1}(m)+\lambda f_i(m_i)-\lambda f_{i-1}\phi_{i-1}d_{\Lambda V_i}(m_i)=0$ since $f_i(m_i)=0$ and $f_{i-1}\phi_{i-1}=0$ by induction.
\item $\phi_i g_i(m)=\phi_{i-1} g_{i-1}(m)=0$
since  $\phi_{i-1}g_{i-1}=0$ by induction.
\item $\phi_i \phi_i(m_i)=0$ since $\phi_i(m_i)=0$;\\
$\phi_i \phi_i(m)=\phi_i(\phi_{i-1}(m)+\lambda(m_i-\phi_{i-1}d_{\Lambda V_i}(m_i)) = \phi_{i-1}\phi_{i-1}(m)+\lambda\phi_i(m_i)-\lambda\phi_{i-1}\phi_{i-1}d_{\Lambda V_i}(m_i)=0$ since $\phi_i(m_i)=0$ and $\phi_{i-1}\phi_{i-1}=0$ by induction.
\item $f_i g_i(m)= f_{i-1} g_{i-1}(m)= m$
since $f_{i-1} g_{i-1}=id_{\Lambda W_{i-1}}$ by induction.
\item 
$\phi_i d_{\Lambda V_i}(m_i) + d_{\Lambda V_i} \phi_i(m_i) = \phi_{i-1}d_{\Lambda V_i}(m_i) + m_i - \phi_{i-1}d_{\Lambda V_i}(m_i) =  m_i - g_if_i(m_i)$;
\\
$\phi_i d_{\Lambda V_i}(m) + d_{\Lambda V_i} \phi_i(m) =\phi_{i-1} d_{\Lambda V_i}(m) + d_{\Lambda V_i}(\phi_{i-1}(m) + \lambda (m_i -  \phi_{i-1}d_{\Lambda V_i}(m_i))
=m - g_{i-1}f_{i-1}(m)+\lambda g_{i-1}f_{i-1}d_{\Lambda V_i}(m_i) 
= m - g_if_i(m)$.
\item $f_i d_{\Lambda V_i}(m_i)=f_{i-1}d_{\Lambda V_i}(m_i) - f_{i-1}d_{\Lambda V_i}(m_i)=0=d_{\Lambda W_i} f_i(m_i)$;
\\
$d_{\Lambda W_i} f_i(m)=d_{\Lambda W_i}(f_{i-1}(m)-\lambda f_{i-1}d_{\Lambda V_i}(m_i))
=f_{i-1}d_{\Lambda V_i}(m)=f_{i}d_{\Lambda V_i}(m)$.
\item $g_i d_{\Lambda W_i}(m)=g_{i-1} d_{\Lambda 
W_{i-1}}(m) =
 d_{\Lambda V_{i-1}}g_{i-1} (m)=
d_{\Lambda V_i} g_i(m)$.
\end{enumerate} 
Now, it is easy to see that $(\Lambda W_i,d_{\Lambda W_i})$ is a DG-module, that is, $d_{\Lambda W_i}$ is a differential. It is enough to prove it for $m_i$: $d_{\Lambda W_i}d_{\Lambda W_i}(m_i) = d_{\Lambda W_i} f_{i-1}d_{\Lambda V_i} (m_i) = d_{\Lambda W_i} f_{i}d_{\Lambda V_i} (m_i) = f_i d_{\Lambda V_i} d_{\Lambda V_i} (m_i) = 0.$
\\
Therefore, we conclude that the output $(f,g,\phi)$ of Algorithm \ref{alg:minimal} is a contraction from $\Lambda V$ to $\Lambda W$. 
\\
To prove that $\Lambda W$ is an algebra,
we have to prove the following properties:
\begin{enumerate}
    \item $d_{\Lambda W}$ is derivation.
    \item $1_{\Lambda W}:=f(1_{\Lambda V})$ is an identity.
    \end{enumerate}
      Using that $(f,g,\phi)$ is a contraction and that,  by construction, we have that 
      $d_{\Lambda V} \mu_{\Lambda V} = fd_{\Lambda V}\mu_{\Lambda V} (g\otimes g)$, $f \mu_{\Lambda V} = \mu_{\Lambda W} (f\otimes f)$, $g \mu_{\Lambda W} = \mu_{\Lambda V} (g\otimes g)$ and $\phi \mu_{\Lambda V} = \mu_{\Lambda V}(id_{\Lambda V} \otimes \phi + \phi \otimes g f)$, then:
\begin{enumerate}
    %
    \item 
    $d_{\Lambda W}\mu_{\Lambda W} (x\otimes y)= f d_{\Lambda V} \mu_{\Lambda V} (g(x)\otimes g(y)) =\\ f \mu_{\Lambda V}(d_{\Lambda V} g(x) \otimes g(y)) + (-1)^{|g(x)|} f \mu_{\Lambda V}(g(x) \otimes d_{\Lambda V} g(y)) =\\ 
    \mu_{\Lambda W} (f d_{\Lambda V}g(x) \otimes fg(y)) +
    (-1)^{|g(x)|} \mu_{\Lambda W}
    (f g(x) \otimes f d_{\Lambda V} g(y)) =\\ 
    \mu_{\Lambda W} (d_{\Lambda W} fg(x) \otimes fg(y)) +     (-1)^{|x|} \mu_{\Lambda W}
    (f g(x) \otimes d_{\Lambda W} fg(y)) =\\ 
    \mu_{\Lambda W} (d_{\Lambda W} (x) \otimes y) + (-1)^{|x|} \mu_{\Lambda W}
    (x \otimes d_{\Lambda W} (y))$ since $d_{\Lambda V}$ is a derivation and $|g|=0$.
   \item 
    $\mu_{\Lambda W}(1_{\Lambda W}\otimes x)= \mu_{\Lambda W}(f(1_{\Lambda V})\otimes f g(x)) = f \mu_{\Lambda V} (1_{\Lambda V} \otimes g(x))=f g(x))=x$ since $1_{\Lambda V}$ is an identity and $fg=1_{\Lambda W}$.
    \end{enumerate}
 Therefore, we can conclude that  $\Lambda W$ is a CDG-algebra and $(f,g,\phi)$ is a full algebra contraction from $\Lambda V$ to $\Lambda W$.
 \\
 Finally, $\Lambda W$ is a Sullivan algebra  considering the filtration of $V$ restricted to $W$.  Besides, $\Lambda W$ is minimal by induction.
 Trivially, $W_0$ is minimal since $d_{\Lambda W_0}=0$. Suppose that
$\Lambda W_{i-1}$ is minimal. Now, 
$d_{\Lambda W_{i}}$ is updated when 
$f_{i-1}d_{\Lambda V} (m_i) \in \Lambda ^{\geq 2} W_{i-1}$ and in this case, 
$d_{\Lambda W_{i}}(m_i) =  f_{i-1} d_{\Lambda V_i} (m_i)\in  \Lambda ^{\geq 2}W_{i}$.
\end{proof}

Observe that   
$g$ is one-to-one due to $fg=id_{\Lambda W}$ then, we can obtain a base of generators $g(W)\oplus X$ 
for   $\Lambda V$, with the property that 
$\Lambda X$ is contractible (since $\Lambda V$ and $\Lambda W$ have the same  rational cohomology, due to the contraction from $\Lambda V$ to $\Lambda W$).
Besides, by construction, $\Lambda X$ is isomorphic to $\lambda (U\oplus dU)$ where $U\oplus dU$ is composed by the set of all the pairs $\{(m_i,m_j)\}$ created when running Algorithm \ref{alg:minimal} on $V$.

\section{Some examples}
Below we show some examples of the computation of minimal  Sullivan algebras using Algorithm \ref{alg:minimal}. A naive implementation of the algorithm can be consulted in http://grupo.us.es/cimagroup/downloads.htm.

For the sake of simplicity, from now on, given an algebra $\Lambda V$ and two elements $a,b\in V$, the element $\mu_{\Lambda V}(a\otimes b)$ will be denoted by $ab$ when no confusion can arise.

\begin{ex}\label{ex1}
Consider the Sullivan algebra $\Lambda V$ where
$V =\{V^p\}_{p\geq 1}$ being:  
$$V^1= \{a_1, b_1, c_1\},\; V^2=\{v_2\}, \;V^3=\{u_3\},$$
 differential $d_{\Lambda V}$  defined on $V$ as follows: 
$$d_{\Lambda V}(a_1)=v_2,\;
d_{\Lambda V}(b_1)=0=d_{\Lambda V}(c_1),\;d_{\Lambda V}(u_3)=v_2^2,$$ 
and filtration:  $$V(0)=\{b_1, c_1, v_2\}\subset V(1)=V.$$ 
Then, Algorithm \ref{alg:minimal} runs as follows.
\\
Initially, $W=V(0)$, $f=id_{\Lambda V(0)}$, $g=id_{\Lambda V(0)}$, $\phi=0$ and $d_{\Lambda W} = 0$. 
    %
    \item Now, $f d_{\Lambda V}(a_1)=f(v_2)=v_2  \notin \Lambda ^{\geq 2} W$. Then
    $W$ and the image of the morphisms $f$, $g$ and $\phi$ for each generator are updated as follows:
    $$\begin{tabular}{c|c|c|c|c|c|c|}
         $V$ &  $d_{\Lambda V}$ & $W$ & $d_{\Lambda W}$ & $f$ & $g$ & $\phi$\\
         \hline
         $b_1$ & $0$ & $b_1$ & $0$ & $b_1$ & $b_1$ & $0$\\
         $c_1$ & $0$ & $c_1$ & $0$ & $c_1$ & $c_1$ & $0$\\ 
         $v_2$ & $0$ & & &$0$ &  & $a_1$\\
         $a_1$ & $v_2$ & && $0$ &  & $0$\\
         $u_3$ &$v_2^2$ & & & $0$ &  & $0$
    \end{tabular}$$
    Finally, $f d_{\Lambda V}(u_3)=f(v_2^2)
    = f(v_2) f(v_2) = 0$ since $f(v_2)=0$. Then $W$ and the image of the morphisms $f$, $g$ and $\phi$ for each generator are updated as follows:
        $$\begin{tabular}{c|c|c|c|c|c|c|}
         $V$ &  $d_{\Lambda V}$ & $W$ & $d_{\Lambda W}$ & $f$ & $g$ & $\phi$\\
         \hline
         $b_1$ & $0$ & $b_1$ & $0$ & $b_1$ & $b_1$ & $0$\\
         $c_1$ & $0$ & $c_1$ & $0$ & $c_1$ & $c_1$ & $0$\\ 
         $v_2$ & $0$ & & &$0$ &  & $a_1$\\
         $a_1$ & $v_2$ & && $0$ &  & $0$\\
         $u_3$ &$v_2^2$ & $u_3$ & $0$ & $u_3$ & $u_3-a_1 v_2$ & $0$
    \end{tabular}$$
    Therefore, $\Lambda W$ with set  of generators $W=\{b_1,c_1,u_3\}$ 
    is a minimal Sullivan algebra. Besides,  
   take the pair $\{(a_1,v_2)\}$, obtained when running Algorithm \ref{alg:minimal} on $V$ and 
    write $U=\{a_1\}$. We then finally obtain that $\Lambda V$ is isomorphic to $\Lambda W\otimes\Lambda (U\oplus dU) $ being  $\Lambda W$ a minimal Sullivan algebra and $\Lambda (U\oplus dU)$ contractible. Besides, the basis provided by the ``ad hoc" method can be obtained using morphism $g$. This way, the new basis of generators of $\Lambda V$ is:
    $$\{g(b_1)=b_1,\;  g(c_1)=c_1,\;  g(u_3)=u_3-a_1v_2\}$$
    with differential:
    $$d_{\Lambda V}g(b_1)=0,\;  d_{\Lambda V}g(c_1)=0,\;  d_{\Lambda V}g(u_3)=0.$$
        \end{ex}

\begin{ex}
If we consider the Sullivan algebra $\Lambda V$ where $V$ is the same as the one given in Example \ref{ex1}, differential $d_{\Lambda V}$ defined on $V$ as follows:
$$d_{\Lambda V}(a_1)=d_{\Lambda V}(b_1)=d_{\Lambda V}(c_1)=v_2,\; d_{\Lambda V}(v_2)=0,\; d_{\Lambda V}(u_3)=v_2^2$$
and filtration 
$$V(0)=\{v_2\}\subset V(1)=V,$$
then Algorithm \ref{alg:minimal} produces 
the same minimal Sullivan algebra and the same full algebra contraction as in Example \ref{ex1}.
\end{ex}

\begin{ex}
Consider the Sullivan algebra $\Lambda V$ where $V = \{V^p\}_{p\geq 1}$ being:
$$V^1=\{a_1, b_1, c_1,x_1,y_1\}, \; V^2=\{v_2, p_2, q_2, r_2\}, \;V^3=\{u_3\},$$
differential $d_{\Lambda V}$  defined on $V$ as follows: 
$$d_{\Lambda V}(a_1)=d_{\Lambda V}(b_1)=d_{\Lambda V}(c_1)=d_{\Lambda V}(v_2)=0,\;$$
$$d_{\Lambda V}(x_1)=v_2-2a_1b_1+2b_1c_1,\;d_{\Lambda V}(y_1)=v_2-2a_1c_1-2b_1c_1,\;d_{\Lambda V}(p_2)=2v_2a_1,$$
$$d_{\Lambda V}(q_2)=2v_2b_1,\;d_{\Lambda V}(r_2)=2v_2c_1,\;d_{\Lambda V}(u_3)=v_2^2,$$
and filtration: 
$$V(0)=\{a_1,b_1,c_1,v_2\}\subset V(1)=V.$$
Then, the output of Algorithm \ref{alg:minimal} is:
$$\begin{tabular}{c|c|c|c|c|c|}
         $V$ 
         & $ W$ &$d_{\Lambda W}$&$f$ & $g$ & $\phi$\\
         \hline
         $a_1$ 
         &         $a_1$ &$ 0$& 
         $a_1$ & $a_1$ & $0$\\ 
         $b_1$ 
         & $b_1$ &$0 $&  
         $b_1$ & $b_1$ & $0$\\ 
         $c_1$ 
         & $c_1$ &$0 $& 
         $c_1$ & $c_1$ & $0$\\
         $v_2$ 
         &&$ $& 
         $2a_1b_1-2b_1c_1$ &  & $x_1$\\
         $x_1$ 
         & &$ $& 
         $0$ &  & $0$\\
         $y_1$ 
         & $y_1$ &$2a_1b_1-2a_1c_1-4b_1c_1 $& 
         $y_1$ & $y_1-x_1$ & $0$\\ 
         $p_2$ 
         & $ p_2$ &$-4a_1b_1c_1 $& 
         $p_2$ & $p_2-2a_1x_1$ & $0$\\ 
         $q_2$ 
         & $q_2$ &$ 0$& 
         $q_2$ & $q_2-2b_1x_1$ & $0$\\
         $r_2$ 
         & $r_2$ &$ 4a_1b_1c_1 $& 
         $r_2$ & $r_2-2c_1x_1$ & $0$\\
         $u_3$ 
         & $u_3$ &$0 $& 
         $u_3$ & $u_3-x_1v_2$ & $0$\\
         &&&&$-2a_1b_1x_1+2b_1c_1x_1$&
    \end{tabular}$$
    Therefore, $\Lambda W$ with $W=\{a_1,b_1,c_1,y_1,p_1,q_1,r_1u_3\}$ is a minimal Sullivan algebra with the same (co)homology  than $\Lambda V$. Besides, take the pair $\{(x_1,v_2)\}$ obtained when running Algorithm \ref{alg:minimal}. 
    If  we denote $v'_2:=v_2-2a_1b_1 $ and write $U=\{x_1\}$, then $dU=\{v'_2\}$ and we finally obtain that $\Lambda V$ is isomorphic to $\Lambda W\otimes\Lambda (U\oplus dU) $ being  $\Lambda W$ a minimal Sullivan algebra and $\Lambda (U\oplus dU)$ contractible.
    Besides, the basis provided by the ``ad hoc" method can be obtained using morphism $g$:
    $$\big\{g(a_1)=a_1,\; g(b_1)=b_1,\;  g(c_1)=c_1,\;  g(y_1)=y_1-x_1, \; 
    g(p_2)=p_2-2a_1x_1,$$
   $$ g(q_2)=q_2-2b_1x_1,\;  g(r_2)=r_2-2c_1x_1,\;  g(u_3)=u_3-x_1v_2-2a_1b_1x_1+2b_1c_1x_1\big\}$$
    with differential:
   $$d_{\Lambda V}g(a_1)=d_{\Lambda V}g(b_1)= d_{\Lambda V}g(c_1)=
   d_{\Lambda V}g(q_2)=d_{\Lambda V}g(u_3)=0,$$
  $$d_{\Lambda V}g(y_1)=2g(a_1)g(b_1)-2g(a_1)g(c_1)-4g(b_1)g(c_1),$$
    $$d_{\Lambda V}g(p_2)=-4g(a_1)g(b_1)g(c_1),\;
  d_{\Lambda V}g(r_2)=4g(a_1)g(b_1)g(c_1). $$   
    \end{ex}

\begin{ex}
Consider the Sullivan algebra $\Lambda V$ where $V = \{V^p\}_{p\geq 1}$ being:
$$V^1=\{x_1\}, \; V^2=\{v_2,w_2\}, \;V^3=\{x_3\},\; V^4=\{v_4,w_4\},\; V^5=\{x_5\}, \; V^7=\{x_7\},$$
and differential $d_{\Lambda V}$  defined on $V$ as follows: 
$$d_{\Lambda V}(x_1)=v_2 + w_2,\;d_{\Lambda V}(x_3)=v_4 + w_4 + v_2 w_2,\;d_{\Lambda V}(x_5)=v_4 w_2 + v_2 w_4$$
$$d_{\Lambda V}(x_7)=v_4 w_4,\; d_{\Lambda V}(v_2)=d_{\Lambda V}(w_2)=d_{\Lambda V}(v_4)=d_{\Lambda V}(w_4)=0,$$
and filtration: 
$$V(0)=\{v_2, w_2, v_4, w_4\}\subset V(1)=V.$$
Then, the output of Algorithm \ref{alg:minimal} is:
$$\begin{tabular}{c|c|c|c|c|c|}
         $V$ & $W$ & $d_{\Lambda W}$ & $f$ & $g$ & $\phi$\\
         \hline
         $v_2$ &  $v_2$ & $0$ & $v_2$ & $v_2$ & $0$\\ 
         $w_2$ &  &  & $-v_2$ & & $x_1$\\ 
         $v_4$ & $v_4$ & $0$ & $v_4$ & $v_4$ & $0$\\
         $w_4$ &  &  & $v_2^2-v_4$ &  & $-v_2x_1+x_3$\\
         $x_1$ &  &  & $0$ &  & $0$\\
         $x_3$ &  &  & $0$ &  & $0$\\ 
         $x_5$ & $x_5$ & $v_2^3-2v_2x_4$ &  $x_5$ & $v_2^2x_1-v_4x_1-v_2x_3+x_5$ & $0$\\ 
         $x_7$ & $x_7$ & $v_2^2v_4-v_2^4$ & $x_7$ & $v_2v_4x_1-v_4x_3+x_7$ & $0$\\
    \end{tabular}$$
    Therefore, $\Lambda W$ with $W=\{v_2,v_4,x_5,x_7\}$ is a minimal Sullivan algebra with the same (co)homology  than $\Lambda V$. Besides, take
    the pairs $\{(x_1,w_2),(x_3,w_4)\}$ obtained when running Algorithm \ref{alg:minimal} on $V$,
   denote $w'_2=w_2+v_2$
    and
    $w'_4=w_4+v_4 + v_2 w_2$ and write $U=\{x_1,x_3\}$, then $dU=\{w'_2,w'_4\}$ and we finally obtain that $\Lambda V$ is isomorphic to $\Lambda W\otimes\Lambda (U\oplus dU) $ being  $\Lambda W$ a minimal Sullivan algebra and $\Lambda (U\oplus dU)$ contractible.
    Besides, the basis of the Sullivan algebra provided by the ``ad hoc" method can be obtained using morphism $g$:
    $$\big\{g(v_2),\; g(v_4),\;g(x_5),\;g(x_7)\big\}$$
    with differential:
    $$d_{\Lambda V}g(v_2)=d_{\Lambda V}g(v_4)=0,$$
    $$d_{\Lambda V}g(x_5)=g(v_2)^3-2g(v_2)g(x_4),\;d_{\Lambda V}g(x_7)=g(v_2)^2g(v_4)-g(v_2)^4.$$
        \end{ex}
    
    \section{Conclusions and future works}
    
    In this paper, we provide an algorithm that efficiently computes the minimal Sullivan algebra from a Sullivan algebra  not necessarily being minimal.
The  algorithm has been implemented, validated and tested with examples. The  implementation made runs in CoCalc (https://cocalc.com/) and can be downloaded from http://grupo.us.es/\-cimagroup/downloads.htm.

    During the development of this work, we have been aware that M.A. Marco  and V. Manero have been working on a closely related problem to the one we present here. They have implemented in SAGE a program that computes the minimal model of a commutative graded algebra up to a certain degree. The mentioned  work \cite{marco} has been presented in June 2019 in Madrid  in the context of the Int. Conf. on Effective Methods in Algebraic Geometry (MEGA19 website: https://eventos.ucm.es/12097/detail.html). %
Although
they use the ``ad hoc" approach of changing  basis step by step  and we use  chain contractions, we believe it will be interesting in the near future to compare these two points of view.

\section*{References}

\end{document}